\documentclass[11pt]{amsart}
\usepackage{bbm,amsfonts,latexsym,amsmath,amscd,amssymb,fancyhdr}
\usepackage[all]{xy}
\usepackage{fullpage}

\theoremstyle{plain}
\newtheorem{theorem}{Theorem}
\numberwithin{theorem}{section}

\newtheorem{corollary}{Corollary}

\newtheorem{definition}{Definition}
\numberwithin{definition}{section}

\newtheorem{lemma}{Lemma}
\numberwithin{lemma}{section}

\newtheorem{proposition}{Proposition}
\numberwithin{proposition}{section}

\newtheorem{remark}{Remark}
\numberwithin{remark}{section}

\numberwithin{example}{section}

\numberwithin{equation}{section}

\newcommand {\be}{\begin{equation}}
\newcommand {\ee}{\end{equation}}

\newcommand{\h}{\begin{eqnarray*}}
\newcommand{\e}{\end{eqnarray*}}

\newcommand{\CC}{\mathbf{C}}

\newcommand{\ZZ}{\mathbf{Z}}

\newcommand{\ii}{\sqrt{-1}}

\begin{document}

\title[Anomaly Cancellation and Modularity]{Anomaly Cancellation and Modularity}
\author{Fei Han}
\address{Fei Han, Department of Mathematics, National University of Singapore,
 Block S17, 10 Lower Kent Ridge Road,
Singapore 119076 (mathanf@nus.edu.sg)}
\author{Kefeng Liu}
\address{Kefeng Liu, Department of Mathematics, University of California at Los Angeles,
Los Angeles, CA 90095, USA (liu@math.ucla.edu) and  Center of Mathematical Sciences, Zhejiang University, 310027, P.R. China}
\author{Weiping Zhang}
\address{Weiping Zhang, Chern Institute of Mathematics \& LPMC, Nankai
University, Tianjin 300071, P.R. China. (weiping@nankai.edu.cn)}
\maketitle

\begin{abstract}It has been shown that the Alvarez-Gaum$\mathrm{\acute{e}}$-Witten miraculous
anomaly cancellation formula in type IIB superstring theory and
its various generalizations can be derived from modularity of
certain characteristic forms. In this paper, we show that the
Green-Schwarz formula and the Schwarz-Witten formula in type I
superstring theory can also be derived from the modularity of
those characteristic forms and thus unify the
Alvarez-Gaum$\mathrm{\acute{e}}$-Witten formula, the Green-Schwarz
formula as well as the Schwarz-Witten formula in the same
framework. Various generalizations of these remarkable formulas
are also established.

\end{abstract}

\section*{Introduction} Let $Z\to X\to B$ be a fiber bundle with fiber $Z$ being 10 dimensional.
Let $TZ$ be the vertical tangent bundle equipped with a metric
$g^{TZ}$ and an associated Levi-Civita connection $\nabla^{TZ}$
(cf. \cite[Proposition 10.2]{BGV}). Let  $R^{TZ}=(\nabla^{TZ})^2$
be the curvature of $\nabla^{TZ}$. Let $T_\CC Z$ be the
complexification of $TZ$ with the induced Hermitian connection
$\nabla^{T_\CC Z}$. Let $\widehat{A}(TZ, \nabla^{TZ})$,
 $L(TZ, \nabla^{TZ})
$ and ${\rm ch}(T_\CC Z,\nabla^{T_\CC Z})$ be the Hirzebruch
$\widehat{A}$-form, the Hirzebruch $L$-form and the Chern
character form respectively (cf. \cite{BGV} and \cite{Z}).

The Alvarez-Gaum$\mathrm{\acute{e}}$-Witten ``miraculous anomaly
cancellation formula" \cite{AW} in type IIB superstring theory
  asserts that
 \be \{L(TZ, \nabla^{TZ})\}^{(12)}-8\{\widehat{A}(TZ,
\nabla^{TZ})\mathrm{ch}(T_\CC Z, \nabla^{T_\CC
Z})\}^{(12)}+16\{\widehat{A}(TZ, \nabla^{TZ})\}^{(12)}=0,\ee which
assures that the corresponding theory is anomaly-free.

On the other hand, Green and Schwarz (\cite{GS}, see also
\cite{S}) discovered that the anomaly in type I superstring theory
with gauge group $SO(32)$ cancels. They found that when the gauge
group is $SO(32)$, the anomaly factorizes so that there is a
Chern-Simons counterterm making the anomaly cancelled. More
precisely, let $F$ be a $32$ dimensional Euclidean vector bundle
over $X$ with Euclidean  connection $\nabla^F$ and $F_\CC$ the
complexfication of $F$ with the induced Hermitian connection
$\nabla^{F_\CC}$, then the Green-Schwarz formula reads as
follows,\footnote{In what follows, we will write characteristic
forms without specifying the connections when there is no
confusion.}
\be \begin{split} &\{\widehat{A}(TZ)\mathrm{ch}(\wedge^2F_\CC)\}^{(12)}+\{\widehat{A}(TZ)\mathrm{ch}(T_\CC Z)\}^{(12)}-2\{\widehat{A}(TZ)\}^{(12)}\\
=&(p_1(TZ)-p_1(F))\cdot \frac{1}{24}\left(\frac{-3p_1(TZ)^2+4p_2(TZ)}{8}-2p_1(F)^2+4p_2(F)+\frac{1}{2}p_1(TZ)p_1(F)\right),
\end{split}\ee
where $p_i(TZ),\, p_i(F), 1\leq i\leq 2$, are the Pontryagin forms
of $(TZ, \nabla^{TZ}), (F, \nabla^F)$ respectively.

The above formulas of Alvarez-Gaum$\mathrm{\acute{e}}$-Witten
 and   Green-Schwarz
  have played crucial roles in the early development of superstring
theory.

More recently,  Schwarz and Witten   \cite{SW} analyzed the
anomaly in type I theory with additional spacetime-filling
D-branes and anti-D-branes pairs and found a similar
factorization. More precisely, let $F_1$ be an $m$ dimensional
Euclidean vector bundle over $X$ equipped with a Euclidean
connection $\nabla^{F_1}$ and $F_2$ be an $n$ dimensional
Euclidean vector bundle over $X$ equipped with a Euclidean
connection $\nabla^{F_2}$, when $m=n+32$, one has \be
\begin{split}
&\{\widehat{A}(TZ)\mathrm{ch}(\wedge^2{F_1}_\CC)\}^{(12)}+
\{\widehat{A}(TZ)\mathrm{ch}(S^2{F_2}_\CC)\}^{(12)}-\{\widehat{A}(TZ)\mathrm{ch}({F_1}_\CC\otimes {F_2}_\CC)\}^{(12)}\\
&+\{\widehat{A}(TZ)\mathrm{ch}(T_\CC Z)\}^{(12)}-2\{\widehat{A}(TZ)\}^{(12)}\\
=&\left(p_1(TZ)-p_1(F_1)+p_1(F_2)\right)\\
&\cdot
\frac{1}{24}\left(\frac{-3p_1(TZ)^2+4p_2(TZ)}{8}-2p_1(F_1)^2+4p_2(F_1)+2p_1(F_2)^2-4p_2(F_2)+
\frac{1}{2}p_1(TZ)(p_1(F_1)-p_1(F_2))\right).
\end{split}\ee
Therefore, similarly, there is a Chern-Simons counterterm to make
the anomaly cancelled.

In \cite{Liu1, HL}, it is shown that the Alvarez-Gaum$\mathrm{\acute{e}}$-Witten ``miraculous anomaly
 cancellation formula" can be derived from the modularity of certain characteristic forms. In fact, let $V$ be a Euclidean
 vector bundle equipped with a Euclidean  connection over $X$,  one can construct two characteristic
  forms $P_1(TZ, V, \tau)$ and $P_2(TZ, V, \tau)$ such that when $p_1(TZ)=p_1(V)$, $P_1(TZ, V, \tau)$ and $P_2(TZ, V, \tau)$ are
  level 2 modular forms over $\Gamma_0(2)$ and $\Gamma^0(2)$ respectively.  Moreover they are modularly related and form what we call
   a modular pair (see page 9 for more details).
  The Alvarez-Gaum$\mathrm{\acute{e}}$-Witten   formula  can then  be
  deduced from this modular pair $(P_1(TZ, V, \tau), P_2(TZ, V, \tau))$ if one sets $V=TZ$. This construction
  is further
  generalized in \cite{HZ2}
  to the case where a complex line bundle is involved, in dealing with   the Ochanine congruence \cite{Och} on spin$^c$   manifolds.

 In a recent article \cite{HLZ}, in using the Eisenstein series $E_2(\tau)$, we constructed a pair of
 modularly related characteristic forms
 $(\mathcal{P}_1(TZ, V, \xi, \tau), \mathcal{P}_2(TZ, V, \xi, \tau))$
  without assuming  $p_1(TZ)=p_1(V)$. When  $p_1(TZ)=p_1(V)$ and $\xi$ is trivial,
 $(\mathcal{P}_1(TZ, V, \xi, \tau), \mathcal{P}_2(TZ, V, \xi, \tau))$ degenerates to $(P_1(TZ, V, \tau), P_2(TZ, V, \tau))$.

In the current  paper, we will show that the formulas due to
Green-Schwarz   (0.3) and Schwarz-Witten   (0.4) can also be
deduced from the modularity of the pair $(\mathcal{P}_1(TZ, V,
\xi, \tau), \mathcal{P}_2(TZ, V, \xi, \tau))$. Actually, we need
only to make use of the modularity of $\mathcal{P}_2(TZ, V, \xi,
\tau)$  by replacing  $V$ by a super vector bundle $F_1-F_2$. Our
method also generates many generalizations of the Green-Schwarz
  and   Schwarz-Witten formulas. See Theorem 1.1 and its
corollaries for more details.

It is quite  amazing   that  all of the three anomaly cancellation
formulas
 due to Alvarez-Gaum$\mathrm{\acute{e}}$-Witten,
  Green-Schwarz,
  as well as  Schwarz-Witten, can be unified through  a single
 modular pair $(\mathcal{P}_1(TZ, V, \xi, \tau),
\mathcal{P}_2(TZ, V, \xi, \tau))$.  It illustrates one of the deep
implications of modularity in physics.

In the rest of this paper, we will first present the Green-Schwarz
type factorization formulas in Section 1 and then show how to
derive them  from modularity in Section 2.

\section{Green-Schwarz Type Factorization Formulas}
The purpose of this section is to present various generalizations of the Green-Schwarz formula and the Schwarz-Witten formula.

Let $Z\to X\to B$ be a fiber bundle with fiber $Z$ being 10
dimensional. Let $TZ$ be the vertical tangent bundle equipped with
a metric $g^{TZ}$ and an associated Levi-Civita connection
$\nabla^{TZ}$ (cf. \cite[Proposition 10.2]{BGV}). Let
$R^{TZ}=(\nabla^{TZ})^2$ be the curvature of $\nabla^{TZ}$. Let
$T_\CC Z$ be the complexification of $TZ$ with the induced
Hermitian connection $\nabla^{T_\CC Z}$.

Let $F_1$ be an $m$ dimensional Euclidean vector bundle over $X$
equipped  with a Euclidean connection $\nabla^{F_1}$ and $F_2$ be
an $n$ dimensional Euclidean vector bundle over $X$ equipped with
a Euclidean connection $\nabla^{F_2}$.

Let $\xi$ be a rank two real oriented Euclidean vector bundle over
$X$ carrying a Euclidean connection $\nabla^{\xi}$. Let $c=e(\xi,
\nabla^\xi)$ be the Euler form canonically associated to
$\nabla^\xi$.

If $E$ is a real (resp. complex) vector bundle over $X$, set
$\widetilde{E}=E-{\dim E}\in KO(X)$ (resp.    $K(X)$).

If $\omega$ is a differential form, denote the degree $j$-component of $\omega$ by $\omega^{(j)}$.

\begin{theorem} The following identity holds,
\be \begin{split} &\{\widehat{A}(TZ)e^{\frac{c}{2}}\mathrm{ch}(\wedge^2{F_1}_\CC)\}^{(12)}+
\{\widehat{A}(TZ)e^{\frac{c}{2}}\mathrm{ch}(S^2{F_2}_\CC)\}^{(12)}-\{\widehat{A}(TZ)e^{\frac{c}{2}}\mathrm{ch}({F_1}_\CC\otimes {F_2}_\CC)\}^{(12)}\\
&+\{\widehat{A}(TZ)e^{\frac{c}{2}}\mathrm{ch}(T_\CC Z)\}^{(12)}+\left(\frac{(m-n-32)(m-n-31)}{2}-2\right)\{\widehat{A}(TZ)e^{\frac{c}{2}}\}^{(12)}\\
&-(m-n-32)\{\widehat{A}(TZ)e^{\frac{c}{2}}\mathrm{ch}({F_1}_\CC-{F_2}_\CC)\}^{(12)}\\
&+5\{\widehat{A}(TZ)e^{\frac{c}{2}}\mathrm{ch}(\widetilde{\xi_\CC}\otimes\widetilde{\xi_\CC})\}^{(12)}
+3\{\widehat{A}(TZ)e^{\frac{c}{2}}\mathrm{ch}((m-n-31-{F_1}_\CC+{F_2}_\CC)\otimes\widetilde{\xi_\CC})\}^{(12)}\\
=&(p_1(TZ)-p_1(F_1)+p_1(F_2))\\
&\cdot\left\{-\frac{e^{\frac{1}{24}(p_1(TZ)-p_1(F_1)+p_1(F_2))}-1}{p_1(TZ)-p_1(F_1)+p_1(F_2)}
\widehat{A}(TZ)e^{\frac{c}{2}}\mathrm{ch}(\mathfrak{A})+
e^{\frac{1}{24}(p_1(TZ)-p_1(F_1)+p_1(F_2))}\widehat{A}(TZ)e^{\frac{c}{2}}\right\}^{(8)},
\end{split}\ee
where
\be \begin{split} \mathfrak{A}=&\wedge^2{F_1}_\CC+S^2{F_2}_\CC-{F_1}_\CC\otimes {F_2}_\CC+T_\CC Z+
\frac{(m-n-32)(m-n-31)}{2}-2\\
&-(m-n-32)({F_1}_\CC-{F_2}_\CC)+5\widetilde{\xi_\CC}\otimes\widetilde{\xi_\CC}+
3(m-n-31-{F_1}_\CC+{F_2}_\CC)\otimes\widetilde{\xi_\CC};
\end{split} \ee
if  $\xi$ is trivial, the following identity holds,
\be \begin{split} &\{\widehat{A}(TZ)\mathrm{ch}(\wedge^2{F_1}_\CC)\}^{(12)}+
\{\widehat{A}(TZ)\mathrm{ch}(S^2{F_2}_\CC)\}^{(12)}-\{\widehat{A}(TZ)\mathrm{ch}({F_1}_\CC\otimes {F_2}_\CC)\}^{(12)}\\
&+\{\widehat{A}(TZ)\mathrm{ch}(T_\CC Z)\}^{(12)}+\left(\frac{(m-n-32)(m-n-31)}{2}-2\right)\{\widehat{A}(TZ)\}^{(12)}\\
&-(m-n-32)\{\widehat{A}(TZ)\mathrm{ch}({F_1}_\CC-{F_2}_\CC)\}^{(12)}\\
=&(p_1(TZ)-p_1(F_1)+p_1(F_2))\\
&\cdot\left\{-\frac{e^{\frac{1}{24}(p_1(TZ)-p_1(F_1)+p_1(F_2))}-1}{p_1(TZ)-p_1(F_1)+p_1(F_2)}
\widehat{A}(TZ)\mathrm{ch}(\mathfrak{B})+
e^{\frac{1}{24}(p_1(TZ)-p_1(F_1)+p_1(F_2))}\widehat{A}(TZ)\right\}^{(8)},
\end{split}\ee
where
\be \begin{split} \mathfrak{B}=&\wedge^2{F_1}_\CC+S^2{F_2}_\CC-{F_1}_\CC\otimes {F_2}_\CC+T_\CC Z+
\frac{(m-n-32)(m-n-31)}{2}-2\\
&-(m-n-32)({F_1}_\CC-{F_2}_\CC).
\end{split} \ee
\end{theorem}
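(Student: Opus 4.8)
The plan is to derive both identities from the modularity of the characteristic form $\mathcal{P}_2(TZ,V,\xi,\tau)$ of \cite{HLZ}, applied to the virtual bundle $V=F_1-F_2$ of rank $m-n$. Set $\alpha=p_1(TZ)-p_1(F_1)+p_1(F_2)=p_1(TZ)-p_1(V)$. The first observation is that in the $\ZZ_2$-graded (super) sense one has $\wedge^2 V_\CC=\wedge^2{F_1}_\CC+S^2{F_2}_\CC-{F_1}_\CC\otimes{F_2}_\CC$ in $K(X)$, since the symmetric and exterior squares interchange on the odd summand $-F_2$. Combined with additivity of $\mathrm{ch}$ and $\mathrm{ch}(r)=r$ for a constant virtual rank $r$, this identifies the entire left-hand side of $(0.5)$ with $\{\widehat{A}(TZ)e^{c/2}\mathrm{ch}(\mathfrak{A})\}^{(12)}$, and the left-hand side of $(0.7)$ with $\{\widehat{A}(TZ)\mathrm{ch}(\mathfrak{B})\}^{(12)}$; thus $\mathfrak{A}$ and $\mathfrak{B}$ are nothing but the constant ($q^0$) coefficients of the relevant Witten-type bundles.

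Next I would recall the shape of $\mathcal{P}_2$, namely $\mathcal{P}_2(TZ,V,\xi,\tau)=e^{\frac{1}{24}\alpha E_2(\tau)}\,\mathcal{Q}_2(TZ,V,\xi,\tau)$, where $\mathcal{Q}_2=\widehat{A}(TZ)e^{c/2}\mathrm{ch}(\Theta_2(\tau))$ is assembled from the Jacobi theta functions and the Euler form $c=e(\xi,\nabla^\xi)$, normalized so that $\Theta_2(\tau)|_{q=0}=\mathfrak{A}$. By \cite{HLZ} the degree-$12$ part $\{\mathcal{P}_2\}^{(12)}$ is a modular form of weight $6$ over $\Gamma^0(2)$: the anomaly in the transformation law of $E_2(\tau)$ under $\tau\to-1/\tau$ cancels exactly the theta-function anomaly, which is proportional to $\alpha$, so that modularity holds with no relation imposed between $p_1(TZ)$ and $p_1(V)$. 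Since $E_2(\tau)=1+O(q)$, the constant term satisfies $\{\mathcal{P}_2\}^{(12)}|_{q^0}=\{e^{\alpha/24}\widehat{A}(TZ)e^{c/2}\mathrm{ch}(\mathfrak{A})\}^{(12)}$, while the $\alpha$-linear part of $e^{\frac{1}{24}\alpha E_2(\tau)}$ accounts for the quotient $\frac{e^{\alpha/24}-1}{\alpha}$ on the right.

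A short computation shows that the asserted identity $(0.5)$ is equivalent to $\{\mathcal{P}_2\}^{(12)}|_{q^0}=\{\alpha\,e^{\alpha/24}\widehat{A}(TZ)e^{c/2}\}^{(12)}$, an identity from which all dependence on $\mathfrak{A}$ has disappeared; modularity is exactly what removes it. To produce it I would use the structure of $M_\ast(\Gamma^0(2))=\CC[\delta_2,\varepsilon_2]$, with $\delta_2,\varepsilon_2$ of weights $2$ and $4$; then $M_6(\Gamma^0(2))$ is only two dimensional, spanned by $\delta_2^3$ and $\delta_2\varepsilon_2$, so $\{\mathcal{P}_2\}^{(12)}=A\delta_2^3+B\delta_2\varepsilon_2$ for two $\tau$-independent degree-$12$ forms $A,B$ fixed by the first two Fourier coefficients at the cusp $\infty$. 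I would compute these coefficients from the $q$-expansions of the theta functions, of $E_2$, and of the Jacobi-theta factor attached to $\xi$; the integers $5$ and $3$, the quadratic $\tfrac{(m-n-32)(m-n-31)}{2}-2$ and the linear $-(m-n-32)$ are precisely the multiplicities occurring at the $q^0$- and $q^1$-levels. Evaluating the resulting weight-$6$ form at the second cusp of $\Gamma^0(2)$ then forces the $\mathfrak{A}$-free value of the constant term and yields $(0.5)$. Finally $(0.7)$ follows by setting $\xi$ trivial, i.e. $c=0$: this removes the factor $e^{c/2}$ and kills the two $\widetilde{\xi_\CC}$-terms, so that $\mathfrak{A}$ degenerates to $\mathfrak{B}$.

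The principal difficulty I anticipate is not the modular mechanism---which is routine once weight-$6$ modularity over $\Gamma^0(2)$ is granted---but the low-order $q$-expansion bookkeeping. In particular one must follow the Jacobi-theta contribution of the rank-two bundle $\xi$, which simultaneously supplies the factor $e^{c/2}$ at order $q^0$ and the $\widetilde{\xi_\CC}$-representations with coefficients $5$ and $3$ at the next order, through the modular transformation, and check that together with the $E_2$-correction it reproduces exactly the numerical constants and the exponential factors $e^{\alpha/24}$ and $\frac{e^{\alpha/24}-1}{\alpha}$ on the right-hand sides. Once these expansions are secured, the two-dimensionality of $M_6(\Gamma^0(2))$ pins down the constant term and forces the stated factorizations.
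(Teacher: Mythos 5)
Your overall framework coincides with the paper's: establish that $\mathcal{P}_2(TZ,F_1-F_2,\xi,\tau)$ is a weight-$6$ modular form over $\Gamma^0(2)$ (Proposition 2.1), write it as $h_0(8\delta_2)^3+h_1(8\delta_2)\varepsilon_2$ using $M_{\mathbf{R}}(\Gamma^0(2))=\mathbf{R}[\delta_2,\varepsilon_2]$, and extract the factorization from the low-order $q$-expansion. Your K-theoretic observation $\Lambda^2(F_1-F_2)_\CC=\wedge^2{F_1}_\CC+S^2{F_2}_\CC-{F_1}_\CC\otimes{F_2}_\CC$, the identification of the left-hand sides of (0.5) and (0.7) with $\{\widehat{A}(TZ)e^{c/2}\mathrm{ch}(\mathfrak{A})\}^{(12)}$ and $\{\widehat{A}(TZ)\mathrm{ch}(\mathfrak{B})\}^{(12)}$, and the reduction of (0.7) to the case $\xi$ trivial are all correct. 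However, two of your concrete steps are wrong. First, the normalization $\Theta_2(\tau)|_{q=0}=\mathfrak{A}$ is false: the constant term of $\Theta_2$ is $B_0=1$, and $\mathfrak{A}$ arises only as the combination $B_2-32B_1+504B_0$ of the coefficients of $q^0$, $q^{1/2}$ and $q^1$, the integers $32$ and $504$ being dictated by $(8\delta_2)^3=-1-72q^{1/2}-1800q-\cdots$ and $(8\delta_2)\varepsilon_2=-q^{1/2}-32q-\cdots$. Consequently your claimed equivalent form of (0.5), namely $\{\mathcal{P}_2\}^{(12)}|_{q^0}=\{\alpha e^{\alpha/24}\widehat{A}(TZ)e^{c/2}\}^{(12)}$ with $\alpha=p_1(TZ)-p_1(F_1)+p_1(F_2)$, is not the statement to be proved: the $q^0$ coefficient of $\mathcal{P}_2$ is just $\{e^{\alpha/24}\widehat{A}(TZ)e^{c/2}\}^{(12)}=-h_0$, which merely defines $h_0$, and with the correct constant term your ``equivalent identity'' is simply false. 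What (0.5) is actually equivalent to is $\{e^{\alpha/24}\widehat{A}(TZ)e^{c/2}\,\mathrm{ch}(B_2-32B_1+504B_0)\}^{(12)}=\{\alpha e^{\alpha/24}\widehat{A}(TZ)e^{c/2}\}^{(12)}$, where the $\alpha$ on the right is produced by the $q^1$ term of $e^{\frac{1}{24}E_2(\tau)\alpha}$ via $E_2=1-24q-\cdots$.

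Second, your closing move --- fix $h_0,h_1$ from the first two Fourier coefficients at $\infty$ and then ``evaluate at the second cusp'' --- applies the wrong linear functional. The relation you need is among the $q^0$, $q^{1/2}$ and $q^1$ coefficients at the \emph{same} cusp $\infty$: because $M_6(\Gamma^0(2))$ is two-dimensional, the $q^1$ coefficient is the fixed combination ($32$ times the $q^{1/2}$ coefficient minus $504$ times the $q^0$ coefficient) of the first two, and writing out that dependence is precisely the paper's (2.26)--(2.29), which rearranges into the factorization. Passing to the cusp $0$ instead brings in $\mathcal{P}_1$ through the modular-pair relation and yields a different, Alvarez-Gaum\'e--Witten-type identity, not (0.5). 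These two points are genuine gaps: as written, the argument cannot produce the stated right-hand side, even though the modular mechanism you invoke is the correct one.
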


Putting  $m=n+32$ in Theorem 1, we get
\begin{corollary}If $\dim F_1-\dim F_2=32$, the following identity holds,
\be \begin{split} &\{\widehat{A}(TZ)e^{\frac{c}{2}}\mathrm{ch}(\wedge^2{F_1}_\CC)\}^{(12)}+
\{\widehat{A}(TZ)e^{\frac{c}{2}}\mathrm{ch}(S^2{F_2}_\CC)\}^{(12)}-\{\widehat{A}(TZ)e^{\frac{c}{2}}\mathrm{ch}({F_1}_\CC\otimes {F_2}_\CC)\}^{(12)}\\
&+\{\widehat{A}(TZ)e^{\frac{c}{2}}\mathrm{ch}(T_\CC Z)\}^{(12)}-2\{\widehat{A}(TZ)e^{\frac{c}{2}}\}^{(12)}\\
&+5\{\widehat{A}(TZ)e^{\frac{c}{2}}\mathrm{ch}(\widetilde{\xi_\CC}\otimes\widetilde{\xi_\CC})\}^{(12)}
+3\{\widehat{A}(TZ)e^{\frac{c}{2}}\mathrm{ch}((1-{F_1}_\CC+{F_2}_\CC)\otimes\widetilde{\xi_\CC})\}^{(12)}\\
=&(p_1(TZ)-p_1(F_1)+p_1(F_2))\\
&\cdot\left\{-\frac{e^{\frac{1}{24}(p_1(TZ)-p_1(F_1)+p_1(F_2))}-1}{p_1(TZ)-p_1(F_1)+p_1(F_2)}
\widehat{A}(TZ)e^{\frac{c}{2}}\mathrm{ch}(\mathfrak{C})+
e^{\frac{1}{24}(p_1(TZ)-p_1(F_1)+p_1(F_2))}\widehat{A}(TZ)e^{\frac{c}{2}}\right\}^{(8)},
\end{split}\ee
where \be \begin{split} \mathfrak{C}=&\wedge^2{F_1}_\CC+S^2{F_2}_\CC-{F_1}_\CC\otimes {F_2}_\CC+T_\CC Z-2\\
&+5\widetilde{\xi_\CC}\otimes\widetilde{\xi_\CC}+3(1-{F_1}_\CC+{F_2}_\CC)\otimes\widetilde{\xi_\CC};
\end{split} \ee
if  $\xi$ is trivial, we obtain the Schwarz-Witten formula (0.4),
\be \begin{split} &\{\widehat{A}(TZ)\mathrm{ch}(\wedge^2{F_1}_\CC)\}^{(12)}+
\{\widehat{A}(TZ)\mathrm{ch}(S^2{F_2}_\CC)\}^{(12)}-\{\widehat{A}(TZ)\mathrm{ch}({F_1}_\CC\otimes {F_2}_\CC)\}^{(12)}\\
&+\{\widehat{A}(TZ)\mathrm{ch}(T_\CC Z)\}^{(12)}-2\{\widehat{A}(TZ)\}^{(12)}\\
=&(p_1(TZ)-p_1(F_1)+p_1(F_2))\\
&\cdot\left\{-\frac{e^{\frac{1}{24}(p_1(TZ)-p_1(F_1)+p_1(F_2))}-1}{p_1(TZ)-p_1(F_1)+p_1(F_2)}
\widehat{A}(TZ)\mathrm{ch}(\mathfrak{D})+
e^{\frac{1}{24}(p_1(TZ)-p_1(F_1)+p_1(F_2))}\widehat{A}(TZ)\right\}^{(8)},
\end{split}\ee
where \be \begin{split} \mathfrak{D}=&\wedge^2{F_1}_\CC+S^2{F_2}_\CC-{F_1}_\CC\otimes {F_2}_\CC+T_\CC Z-2.
\end{split} \ee
\end{corollary}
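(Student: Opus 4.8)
The plan is to obtain both identities of the Corollary directly from the two identities of Theorem 1 by specializing to $m = n + 32$, and then to verify that the resulting trivial-$\xi$ identity is precisely the Schwarz-Witten formula (0.4). Since Theorem 1 is valid for arbitrary $m$ and $n$, no new analytic input is required: the Corollary is a pure specialization, supplemented by one routine expansion in characteristic forms.

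First I would set $m = n + 32$ throughout Theorem 1, so that $m - n - 32 = 0$ and $m - n - 31 = 1$. Under this substitution the scalar coefficient $\tfrac{(m-n-32)(m-n-31)}{2} - 2$ collapses to $-2$; the term $-(m-n-32)\{\widehat{A}(TZ)e^{\frac{c}{2}}\mathrm{ch}({F_1}_\CC - {F_2}_\CC)\}^{(12)}$ vanishes identically and is dropped; and the factor $m - n - 31$ inside the last Chern character of (1.1) becomes $1$, producing the term $3\{\widehat{A}(TZ)e^{\frac{c}{2}}\mathrm{ch}((1 - {F_1}_\CC + {F_2}_\CC)\otimes\widetilde{\xi_\CC})\}^{(12)}$. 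Applying the same substitution to the virtual bundles turns $\mathfrak{A}$ into $\mathfrak{C}$ and $\mathfrak{B}$ into $\mathfrak{D}$. This yields the first identity (with $\xi$) and the second identity (trivial $\xi$) of the Corollary at once.

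It then remains only to check that the second identity coincides with the Schwarz-Witten formula (0.4). Its left-hand side is visibly identical to that of (0.4), so it suffices to match the right-hand sides. Writing $U = p_1(TZ) - p_1(F_1) + p_1(F_2)$, I would expand the degree-$8$ component of the bracketed expression: expand $e^{U/24}$ and the divided difference $\frac{e^{U/24} - 1}{U}$ as power series in the degree-$4$ form $U$ (so that only their constant, linear, and quadratic parts contribute), expand $\widehat{A}(TZ)$ and $\mathrm{ch}(\mathfrak{D})$ to total degree $8$, and convolve. Rewriting $\mathrm{ch}(\wedge^2 {F_1}_\CC)$, $\mathrm{ch}(S^2 {F_2}_\CC)$, $\mathrm{ch}({F_1}_\CC \otimes {F_2}_\CC)$ and $\mathrm{ch}(T_\CC Z)$ in terms of $p_1$ and $p_2$, the degree-$8$ part should reduce to the explicit factor $\tfrac{1}{24}\bigl(\cdots\bigr)$ displayed in (0.4).

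The substitution is purely mechanical, so the only genuine work, and the main obstacle, lies in this last expansion: one must carefully track every degree-$8$ contribution arising from the product of the $\widehat{A}$-series, the exponential series in $U$, and $\mathrm{ch}(\mathfrak{D})$, and confirm that the resulting terms in $p_1(TZ)^2$, $p_2(TZ)$, $p_1(F_1)^2$, $p_2(F_1)$, $p_1(F_2)^2$, $p_2(F_2)$, and $p_1(TZ)(p_1(F_1) - p_1(F_2))$ assemble into exactly the coefficients of (0.4). This is lengthy bookkeeping but involves no idea beyond Theorem 1 itself; it is merely the routine expansion that rewrites the compact right-hand side as an explicit polynomial in Pontryagin forms.
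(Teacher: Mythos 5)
Your proposal is correct and follows essentially the same route as the paper: the corollary is obtained there precisely by setting $m=n+32$ in Theorem 1.1, so that $m-n-32=0$ and $m-n-31=1$ collapse the extra terms and turn $\mathfrak{A},\mathfrak{B}$ into $\mathfrak{C},\mathfrak{D}$. The identification of the degree-$8$ bracket with the explicit Pontryagin-form factor of (0.4) is likewise handled in the paper only as a remark asserting it ``can be checked by direct computation,'' which is exactly the bookkeeping step you describe.
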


\begin{remark} It can be checked by direct computation that when $m=n+32$, one indeed has
 \be \begin{split}
& \frac{1}{24}\left(\frac{-3p_1(TZ)^2+4p_2(TZ)}{8}-2p_1(F_1)^2+4p_2(F_1)+2p_1(F_2)^2-4p_2(F_2)+
\frac{1}{2}p_1(TZ)(p_1(F_1)-p_1(F_2)\right)\\
=&\left\{-\frac{e^{\frac{1}{24}(p_1(TZ)-p_1(F_1)+p_1(F_2))}-1}{p_1(TZ)-p_1(F_1)+p_1(F_2)}
\widehat{A}(TZ)\mathrm{ch}(\mathfrak{D})+
e^{\frac{1}{24}(p_1(TZ)-p_1(F_1)+p_1(F_2))}\widehat{A}(TZ)\right\}^{(8)}.
\end{split} \ee
\end{remark}

If we set $n=0$ in Theorem 1,   we get
\begin{corollary}If $\dim F=m$, then the following identity holds,
\be \begin{split} &\{\widehat{A}(TZ)e^{\frac{c}{2}}\mathrm{ch}(\wedge^2{F}_\CC)\}^{(12)}+
\{\widehat{A}(TZ)e^{\frac{c}{2}}\mathrm{ch}(T_\CC Z)\}^{(12)}+\left(\frac{(m-32)(m-31)}{2}-2\right)\{\widehat{A}(TZ)e^{\frac{c}{2}}\}^{(12)}\\
&-(m-32)\{\widehat{A}(TZ)e^{\frac{c}{2}}\mathrm{ch}({F}_\CC)\}^{(12)}\\
&+5\{\widehat{A}(TZ)e^{\frac{c}{2}}\mathrm{ch}(\widetilde{\xi_\CC}\otimes\widetilde{\xi_\CC})\}^{(12)}
+3\{\widehat{A}(TZ)e^{\frac{c}{2}}\mathrm{ch}((m-31-{F}_\CC)\otimes\widetilde{\xi_\CC})\}^{(12)}\\
=&(p_1(TZ)-p_1(F))\\
&\cdot\left\{-\frac{e^{\frac{1}{24}(p_1(TZ)-p_1(F))}-1}{p_1(TZ)-p_1(F)}
\widehat{A}(TZ)e^{\frac{c}{2}}\mathrm{ch}(\mathfrak{E})+
e^{\frac{1}{24}(p_1(TZ)-p_1(F))}\widehat{A}(TZ)e^{\frac{c}{2}}\right\}^{(8)},
\end{split}\ee
where
\be \begin{split} \mathfrak{E}=&\wedge^2{F_1}_\CC+T_\CC Z+
\frac{(m-32)(m-31)}{2}-2\\
&-(m-32)({F}_\CC)+5\widetilde{\xi_\CC}\otimes\widetilde{\xi_\CC}+
3(m-31-{F}_\CC)\otimes\widetilde{\xi_\CC};
\end{split} \ee
if $\xi$ is trivial, the following identity holds,
\be \begin{split} &\{\widehat{A}(TZ)\mathrm{ch}(\wedge^2{F}_\CC)\}^{(12)}+\{\widehat{A}(TZ)\mathrm{ch}(T_\CC Z\}^{(12)}+\left(\frac{(m-32)(m-31)}{2}-2\right)\{\widehat{A}(TZ)\}^{(12)}\\
&-(m-32)\{\widehat{A}(TZ)\mathrm{ch}({F}_\CC)\}^{(12)}\\
=&(p_1(TZ)-p_1(F))\left\{-\frac{e^{\frac{1}{24}(p_1(TZ)-p_1(F))}-1}{p_1(TZ)-p_1(F)}
\widehat{A}(TZ)\mathrm{ch}(\mathfrak{F})+
e^{\frac{1}{24}(p_1(TZ)-p_1(F))}\widehat{A}(TZ)\right\}^{(8)},
\end{split}\ee
where
\be \begin{split} \mathfrak{F}=&\wedge^2{F}_\CC+T_\CC Z+
\frac{(m-32)(m-31)}{2}-2-(m-32){F}_\CC.
\end{split} \ee

\end{corollary}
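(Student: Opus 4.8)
The plan is to obtain this corollary directly from Theorem 1 by specializing to the case $n=0$. I would apply Theorem 1 with $F_2$ taken to be the rank-zero Euclidean vector bundle over $X$ (equipped with its trivial connection), so that $n=\dim F_2=0$, and write $F=F_1$, $m=\dim F$. With this choice one has ${F_2}_\CC=0$ in $K(X)$, and the entire content of the argument is to record how each term of the two identities in Theorem 1 simplifies under this substitution.

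First I would list the resulting vanishings and reductions. From ${F_2}_\CC=0$ in $K(X)$ we get $S^2{F_2}_\CC=0$ and ${F_1}_\CC\otimes{F_2}_\CC=0$, so that $\mathrm{ch}(S^2{F_2}_\CC)=0$ and $\mathrm{ch}({F_1}_\CC\otimes{F_2}_\CC)=0$; moreover ${F_1}_\CC-{F_2}_\CC=F_\CC$ and $p_1(F_2)=0$. At the level of constants one has $m-n-32=m-32$ and $m-n-31=m-31$, and on the right-hand side $p_1(TZ)-p_1(F_1)+p_1(F_2)=p_1(TZ)-p_1(F)$.

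Substituting these into the first (general-$\xi$) identity of Theorem 1 deletes the $S^2{F_2}_\CC$ and ${F_1}_\CC\otimes{F_2}_\CC$ terms, converts the numerical prefactor to $\frac{(m-32)(m-31)}{2}-2$, turns the linear term into $-(m-32)\{\widehat{A}(TZ)e^{\frac{c}{2}}\mathrm{ch}(F_\CC)\}^{(12)}$, and sends the last twisting term to $3\{\widehat{A}(TZ)e^{\frac{c}{2}}\mathrm{ch}((m-31-F_\CC)\otimes\widetilde{\xi_\CC})\}^{(12)}$. Performing the identical substitution inside the virtual bundle $\mathfrak{A}$ produces exactly the bundle $\mathfrak{E}$, which establishes the first displayed identity of the corollary. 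For the second identity I would further take $\xi$ trivial, whence $c=e(\xi)=0$ (so $e^{\frac{c}{2}}=1$) and $\widetilde{\xi_\CC}=\xi_\CC-2=0$ in $K(X)$; this annihilates the two $\widetilde{\xi_\CC}$-terms and collapses $\mathfrak{E}$ to $\mathfrak{F}$, reproducing the trivial-$\xi$ identity of Theorem 1 evaluated at $n=0$.

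Because the result is merely a specialization of an already-established theorem, there is no genuine obstacle to overcome. The only step demanding attention is the bookkeeping that matches the reduced virtual bundles $\mathfrak{E}$ and $\mathfrak{F}$ term-by-term against the $n=0$ reductions of $\mathfrak{A}$ and $\mathfrak{B}$ (bearing in mind that the symbol $\wedge^2{F_1}_\CC$ appearing in the displayed expression for $\mathfrak{E}$ is to be read as $\wedge^2 F_\CC$ under the renaming $F=F_1$); this verification is routine and requires no new ideas.
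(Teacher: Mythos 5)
Your proposal is correct and coincides with the paper's own derivation: the paper obtains this corollary simply by setting $n=0$ in Theorem 1 (equivalently, taking $F_2$ of rank zero, so the $S^2{F_2}_\CC$ and ${F_1}_\CC\otimes{F_2}_\CC$ terms drop out and $p_1(F_2)=0$), exactly as you do, with the trivial-$\xi$ case handled by the same observation that $c=0$ and $\widetilde{\xi_\CC}=0$. Your remark that the symbol $\wedge^2{F_1}_\CC$ in the displayed $\mathfrak{E}$ should be read as $\wedge^2 F_\CC$ correctly identifies a residual notational slip in the paper and requires no further argument.
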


Putting $m=32$ in the above corollary,  we get
\begin{corollary}If $\dim F=32$, the following identity holds,
\be \begin{split} &\{\widehat{A}(TZ)e^{\frac{c}{2}}\mathrm{ch}(\wedge^2{F}_\CC)\}^{(12)}+
\{\widehat{A}(TZ)e^{\frac{c}{2}}\mathrm{ch}(T_\CC Z)\}^{(12)}-2\{\widehat{A}(TZ)e^{\frac{c}{2}}\}^{(12)}\\
&+5\{\widehat{A}(TZ)e^{\frac{c}{2}}\mathrm{ch}(\widetilde{\xi_\CC}\otimes\widetilde{\xi_\CC})\}^{(12)}
+3\{\widehat{A}(TZ)e^{\frac{c}{2}}\mathrm{ch}((1-{F}_\CC)\otimes\widetilde{\xi_\CC})\}^{(12)}\\
=&(p_1(TZ)-p_1(F))\\
&\cdot\left\{-\frac{e^{\frac{1}{24}(p_1(TZ)-p_1(F))}-1}{p_1(TZ)-p_1(F)}
\widehat{A}(TZ)e^{\frac{c}{2}}\mathrm{ch}(\mathfrak{G})+
e^{\frac{1}{24}(p_1(TZ)-p_1(F))}\widehat{A}(TZ)e^{\frac{c}{2}}\right\}^{(8)},
\end{split}\ee
where
\be \begin{split} \mathfrak{G}=&\wedge^2{F}_\CC+T_\CC Z
-2+5\widetilde{\xi_\CC}\otimes\widetilde{\xi_\CC}+
3(1-{F}_\CC)\otimes\widetilde{\xi_\CC};
\end{split} \ee
if  $\xi$ is trivial, we obtain the Green-Schwarz formula (0.3),
\be \begin{split} &\{\widehat{A}(TZ)\mathrm{ch}(\wedge^2{F}_\CC)\}^{(12)}+\{\widehat{A}(TZ)\mathrm{ch}(T_\CC Z\}^{(12)}-2\{\widehat{A}(TZ)\}^{(12)}\\
=&(p_1(TZ)-p_1(F))\left\{-\frac{e^{\frac{1}{24}(p_1(TZ)-p_1(F))}-1}{p_1(TZ)-p_1(F)}
\widehat{A}(TZ)\mathrm{ch}(\wedge^2{F}_\CC+T_\CC Z-2)+
e^{\frac{1}{24}(p_1(TZ)-p_1(F))}\widehat{A}(TZ)\right\}^{(8)}.
\end{split}\ee
\end{corollary}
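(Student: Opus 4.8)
The plan is to deduce the identity from the modularity, established in \cite{HLZ}, of the characteristic form $\mathcal{P}_2(TZ,V,\xi,\tau)$, applied with $V$ taken to be the virtual (super) bundle $F_1-F_2$. Since $\mathcal{P}_2$ is assembled from Chern character forms together with the operations $\wedge^2,S^2$ and tensor products, all of which extend to $K(X)$, the substitution $V=F_1-F_2$ is legitimate; expanding $\wedge^2(F_1-F_2)_\CC$ and $S^2(F_1-F_2)_\CC$ by the standard $\lambda$-ring identities produces exactly the combination $\wedge^2{F_1}_\CC+S^2{F_2}_\CC-{F_1}_\CC\otimes{F_2}_\CC$ that opens the left-hand side. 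First I would recall the explicit shape of $\mathcal{P}_2$, in particular its $E_2$-correction factor $\exp\!\big(\tfrac{1}{24}E_2(\tau)(p_1(TZ)-p_1(V))\big)$ and the theta-quotient Witten bundle whose weight-$6$ modularity over $\Gamma^0(2)$ is the main input from \cite{HLZ}.

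It is clarifying to first record that, writing $a=p_1(TZ)-p_1(F_1)+p_1(F_2)=p_1(TZ)-p_1(V)$ and observing that $\mathfrak{A}$ is precisely the virtual bundle whose $\widehat{A}(TZ)e^{c/2}\mathrm{ch}$ reassembles the entire left-hand side, the asserted identity is equivalent, since $a$ is of pure degree $4$ and $a\cdot\omega^{(8)}=(a\,\omega)^{(12)}$, to the single vanishing statement
\[
\big\{e^{\frac{a}{24}}\,\widehat{A}(TZ)\,e^{\frac{c}{2}}\,(\mathrm{ch}(\mathfrak{A})-a)\big\}^{(12)}=0.
\]
I would then compute the $q$-expansion of $\{\mathcal{P}_2(TZ,F_1-F_2,\xi,\tau)\}^{(12)}$. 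Its $q^0$-coefficient reproduces $\{e^{a/24}\widehat{A}(TZ)e^{c/2}\mathrm{ch}(\mathfrak{A})\}^{(12)}$: here the rank-two bundle $\xi$ enters through the theta factors attached to $c=e(\xi)$ and, upon expansion, contributes the coefficients $5$ and $3$ in front of $\widetilde{\xi_\CC}\otimes\widetilde{\xi_\CC}$ and $(m-n-31-{F_1}_\CC+{F_2}_\CC)\otimes\widetilde{\xi_\CC}$, together with the numerical constant $\tfrac{(m-n-32)(m-n-31)}{2}-2$ and the linear term $-(m-n-32)({F_1}_\CC-{F_2}_\CC)$.

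The decisive step is modularity. By \cite{HLZ}, $\{\mathcal{P}_2\}^{(12)}$ is a modular form of weight $6$ over $\Gamma^0(2)$, and the $E_2$-correction is exactly what trades the $p_1$-anomaly of the Witten bundle for the overall factor $a$; expanding $\exp(\tfrac{1}{24}E_2(\tau)a)=e^{a/24}\,(1+O(q))$ and collecting the $q^0$ contribution produces the power series $\tfrac{e^{a/24}-1}{a}$ and $e^{a/24}$ displayed in the bracket on the right, exhibiting the weight-$6$ form as $a$ times an explicit degree-$8$ expression. Matching this against the constant term computed above yields the factorization, which is the reformulated vanishing identity. The step I expect to be the main obstacle is the bookkeeping of the second paragraph: carrying the virtual bundle $F_1-F_2$, and the virtual class $\widetilde{\xi_\CC}=\xi_\CC-2$, through the nonlinear operations $\wedge^2$, $S^2$ and the tensor products while matching, to the needed order in $q$, the precise integer coefficients ($5$, $3$, $-(m-n-32)$, and the quadratic constant) forced by the rank-two twist $e^{c/2}$; once these are pinned down, the modular input closes the argument uniformly in $m$, $n$ and $\xi$, the trivial-$\xi$ case giving the second identity of the theorem.
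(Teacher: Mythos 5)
Your overall strategy is the paper's: this corollary is obtained by specializing Theorem 1.1 to $m=32$, $n=0$, and Theorem 1.1 is proved exactly as you propose, from the modularity of $\mathcal{P}_2(TZ,F_1-F_2,\xi,\tau)$ over $\Gamma^0(2)$ together with the $q$-expansion of $\Theta_2$. Your algebraic reformulation is also correct: since $a=p_1(TZ)-p_1(F)$ has pure degree $4$, the asserted identity is equivalent to $\{e^{a/24}\widehat{A}(TZ)e^{c/2}(\mathrm{ch}(\mathfrak{G})-a)\}^{(12)}=0$, and the fraction $\frac{e^{a/24}-1}{a}$ in the statement is pure algebra, not an output of the modular analysis.

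The genuine gap is in what you call the decisive step. Saying that $\mathcal{P}_2$ is a weight-$6$ modular form over $\Gamma^0(2)$ and that the $E_2$-factor ``trades the $p_1$-anomaly for the factor $a$'' does not by itself produce any identity: you never extract a concrete consequence of modularity. The mechanism the paper uses is the structure theorem $M_{\mathbf{R}}(\Gamma^0(2))=\mathbf{R}[\delta_2(\tau),\varepsilon_2(\tau)]$ (Lemma 2.1), which forces $\mathcal{P}_2=h_0(8\delta_2)^3+h_1(8\delta_2)\varepsilon_2$ for two unknown $12$-forms $h_0,h_1$; comparing the coefficients of $1$, $q^{1/2}$ and $q$ on both sides gives three equations in two unknowns, and eliminating $h_0,h_1$ yields the universal linear relation (coefficient of $q$) $=32\cdot$(coefficient of $q^{1/2}$)$-504\cdot$(coefficient of $1$). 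It is this relation, applied to the expansion $\Theta_2=B_0+B_1q^{1/2}+B_2q+\cdots$ together with the $-a\,q$ term coming from $e^{\frac{1}{24}E_2(\tau)a}=e^{a/24}(1-aq+\cdots)$, that produces $B_2-32B_1+504B_0=\mathfrak{G}$ and hence the vanishing statement. In particular the coefficients $5$ and $3$ and the constant $-2$ in $\mathfrak{G}$ are not read off from the $q^0$-term, as your second paragraph suggests (the $q^0$-coefficient is just $\{e^{a/24}\widehat{A}e^{c/2}\}^{(12)}$ since $B_0=1$); they arise from combining $B_2$, $B_1$, $B_0$ with the integers $32$ and $504$ dictated by the $q$-expansions of $\delta_2$ and $\varepsilon_2$. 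Supplying Lemma 2.1 and this three-coefficient elimination closes the argument; without it, the proof does not go through. A minor further point to address is the passage from $\cosh(c/2)$ (which appears in $\mathcal{P}_2$) to $e^{c/2}$ in the final statement.
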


\section{Derivation of the Green-Schwarz Type Factorizations from Modularity}
In this section, we will derive the Green-Schwarz type
factorization formulas presented in Section 1 via the modularity
of   $\mathcal{P}_2(TZ, F_1-F_2, \xi, \tau)$.
\subsection{Preliminaries}In this subsection, we recall
some basic knowledge about the Jacobi theta functions, modular forms and Eisenstein series.
 Although we will not use all the things recalled here, we still put them in this subsection for completeness.

Let $ SL_2(\mathbf{Z}):= \left\{\left.\left(\begin{array}{cc}
                                      a&b\\
                                      c&d
                                     \end{array}\right)\right|a,b,c,d\in\mathbf{Z},\ ad-bc=1
                                     \right\}
                                     $
 as usual be the modular group. Let
$S=\left(\begin{array}{cc}
      0&-1\\
      1&0
\end{array}\right)$, $  T=\left(\begin{array}{cc}
      1&1\\
      0&1
\end{array}\right)$
be the two generators of $ SL_2(\mathbf{Z})$. Their actions on
$\mathbf{H}$ are given by $ S:\tau\rightarrow-\frac{1}{\tau}, \ \
\ T:\tau\rightarrow\tau+1.$

The four Jacobi theta functions are defined as follows (cf.
\cite{C}): \h \theta(v,\tau)=2q^{1/8}\sin(\pi v)
\prod_{j=1}^\infty\left[(1-q^j)(1-e^{2\pi \sqrt{-1}v}q^j)(1-e^{-2\pi
\sqrt{-1}v}q^j)\right]\ ,\e \h \theta_1(v,\tau)=2q^{1/8}\cos(\pi
v)
 \prod_{j=1}^\infty\left[(1-q^j)(1+e^{2\pi \sqrt{-1}v}q^j)
 (1+e^{-2\pi \sqrt{-1}v}q^j)\right]\ ,\e
\h \theta_2(v,\tau)=\prod_{j=1}^\infty\left[(1-q^j)
 (1-e^{2\pi \sqrt{-1}v}q^{j-1/2})(1-e^{-2\pi \sqrt{-1}v}q^{j-1/2})\right]\
 ,\e
\h \theta_3(v,\tau)=\prod_{j=1}^\infty\left[(1-q^j) (1+e^{2\pi
\sqrt{-1}v}q^{j-1/2})(1+e^{-2\pi \sqrt{-1}v}q^{j-1/2})\right]\ .\e
They are all holomorphic functions for $(v,\tau)\in \mathbf{C \times
H}$, where $\mathbf{C}$ is the complex plane and $\mathbf{H}$ is the
upper half plane.

When acted by $S$ and $T$, the theta functions obey the following
transformation laws (cf. \cite{C}), \be \theta(v,\tau+1)=e^{\pi
\sqrt{-1}\over 4}\theta(v,\tau),\ \ \
\theta\left(v,-{1}/{\tau}\right)={1\over\sqrt{-1}}\left({\tau\over
\sqrt{-1}}\right)^{1/2} e^{\pi\sqrt{-1}\tau v^2}\theta\left(\tau
v,\tau\right)\ ;\ee \be \theta_1(v,\tau+1)=e^{\pi \sqrt{-1}\over
4}\theta_1(v,\tau),\ \ \
\theta_1\left(v,-{1}/{\tau}\right)=\left({\tau\over
\sqrt{-1}}\right)^{1/2} e^{\pi\sqrt{-1}\tau v^2}\theta_2(\tau
v,\tau)\ ;\ee \be\theta_2(v,\tau+1)=\theta_3(v,\tau),\ \ \
\theta_2\left(v,-{1}/{\tau}\right)=\left({\tau\over
\sqrt{-1}}\right)^{1/2} e^{\pi\sqrt{-1}\tau v^2}\theta_1(\tau
v,\tau)\ ;\ee \be\theta_3(v,\tau+1)=\theta_2(v,\tau),\ \ \
\theta_3\left(v,-{1}/{\tau}\right)=\left({\tau\over
\sqrt{-1}}\right)^{1/2} e^{\pi\sqrt{-1}\tau v^2}\theta_3(\tau
v,\tau)\ .\ee

\begin{definition} Let $\Gamma$ be a subgroup of $SL_2(\mathbf{Z}).$ A modular form over $\Gamma$ is
a holomorphic function $f(\tau)$ on $\mathbf{H}\cup
\{\infty\}$ such that for any
 $ g=\left(\begin{array}{cc}
             a&b\\
             c&d
             \end{array}\right)\in\Gamma\ $, the following property
             holds,
 $$f(g\tau):=f\left(\frac{a\tau+b}{c\tau+d}\right)=\chi(g)(c\tau+d)^lf(\tau), $$
 where $\chi:\Gamma\rightarrow\mathbf{C}^*$ is a character of
 $\Gamma$ and $l$ is called the weight of $f$.
 \end{definition}

Let
\be E_{2k}=1-\frac{4k}{B_{2k}}\sum_{n=1}^{\infty}\left(\underset{d|n}{\sum}d^{2k-1}\right)q^n \ee
be the Eisenstein series, where $B_{2k}$ is the $2k$-th Bernoulli number.

When $k>1$, $E_{2k}$ is a modular form of weight $2k$ over
$SL_2(\mathbf{Z})$. However, unlike other Eisenstein series,
$E_2(\tau)=1-24\sum_{n=1}^{\infty}\left(\underset{d|n}{\sum}d\right)q^n=1-24q-72q^2-96q^3-\cdots$ is not a modular form over $SL(2,\ZZ)$, instead it is
a quasimodular form over $SL(2,\ZZ)$ satisfying \be
E_2\left(\frac{a\tau+b}{c\tau+d}\right)=(c\tau+d)^2E_2(\tau)-\frac{6\ii
c(c\tau+d)}{\pi}. \ee In particular, we have \be
E_2(\tau+1)=E_2(\tau),\ee \be
E_2\left(-\frac{1}{\tau}\right)=\tau^2E_2(\tau)-\frac{6\ii\tau}{\pi}.\ee
For the precise definition of quasimodular forms, see \cite{KZ}.


Let $ \Gamma_0(2)=\left\{\left.\left(\begin{array}{cc}
a&b\\
c&d
\end{array}\right)\in SL_2(\mathbf{Z})\right|c\equiv0\ \ (\rm mod \ \ 2)\right\}$,
$ \Gamma^0(2)=\left\{\left.\left(\begin{array}{cc}
a&b\\
c&d
\end{array}\right)\in SL_2(\mathbf{Z})\right|b\equiv0\ \ (\rm mod \ \ 2)\right\}$
be the two modular subgroups of $SL_2(\mathbf{Z})$. It is known
that the generators of $\Gamma_0(2)$ are $T,ST^2ST$ and the
generators of $\Gamma^0(2)$ are $STS,T^2STS$ (cf. \cite{C}).

Consider the $q$-series:
\be \delta_1(\tau)=\frac{1}{4}+6\sum_{n=1}^{\infty}\underset{d\ odd}{\underset{d|n}{\sum}}dq^n={1\over 4}+6q+6q^2+\cdots,\ee
\be \varepsilon_1(\tau)=\frac{1}{16}+\sum_{n=1}^{\infty}\underset{d|n}{\sum}(-1)^dd^3q^n={1\over
16}-q+7q^2+\cdots,\ee
\be \delta_2(\tau)=-\frac{1}{8}-3\sum_{n=1}^{\infty}\underset{d\ odd}{\underset{d|n}{\sum}}dq^{n/2}=-{1\over 8}-3q^{1/2}-3q-\cdots,\ee
\be \varepsilon_2(\tau)=\sum_{n=1}^{\infty}\underset{n/d\ odd}{\underset{d|n}{\sum}}d^3q^{n/2}=q^{1/2}+8q+\cdots.\ee

Simply writing
$\theta_j=\theta_j(0,\tau),\ 1\leq j \leq 3,$ then we have (cf. \cite{HBJ} and \cite{Liu3}),
$$ \delta_1(\tau)=\frac{1}{8}(\theta_2^4+\theta_3^4), \ \ \ \
\varepsilon_1(\tau)=\frac{1}{16}\theta_2^4 \theta_3^4\ ,$$
$$\delta_2(\tau)=-\frac{1}{8}(\theta_1^4+\theta_3^4), \ \ \ \
\varepsilon_2(\tau)=\frac{1}{16}\theta_1^4 \theta_3^4\ .$$

If $\Gamma$ is a modular subgroup, let
$M_\mathbf{R}(\Gamma)$ denote the ring of modular forms
over $\Gamma$ with real Fourier coefficients.
\begin{lemma} [\protect cf. \cite{Liu1}] One has that $\delta_1(\tau)\ (resp.\ \varepsilon_1(\tau) ) $
is a modular form of weight $2 \ (resp.\ 4)$ over $\Gamma_0(2)$,
$\delta_2(\tau) \ (resp.\ \varepsilon_2(\tau))$ is a modular form
of weight $2\ (resp.\ 4)$ over $\Gamma^0(2)$ and moreover
$M_\mathbf{R}(\Gamma^0(2))=\mathbf{R}[\delta_2(\tau),
\varepsilon_2(\tau)]$. Moreover, we have
transformation laws \be
\delta_2\left(-\frac{1}{\tau}\right)=\tau^2\delta_1(\tau),\ \ \ \ \
\ \ \ \ \
\varepsilon_2\left(-\frac{1}{\tau}\right)=\tau^4\varepsilon_1(\tau).\ee

\end{lemma}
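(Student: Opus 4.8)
The plan is to reduce everything to the transformation behaviour of the three theta-null values $\theta_j=\theta_j(0,\tau)$, $1\le j\le 3$, under the generators $S$ and $T$ of $SL_2(\mathbf{Z})$, and then to read off each assertion from the explicit expressions $\delta_1=\frac18(\theta_2^4+\theta_3^4)$, $\varepsilon_1=\frac1{16}\theta_2^4\theta_3^4$, $\delta_2=-\frac18(\theta_1^4+\theta_3^4)$, $\varepsilon_2=\frac1{16}\theta_1^4\theta_3^4$. First I would set $v=0$ in the transformation laws (1.6)--(1.9). Since $\theta(0,\tau)=0$, only the remaining three survive; at $v=0$ the factor $e^{\pi\sqrt{-1}\tau v^2}$ drops out, giving under $T:\tau\mapsto\tau+1$ the relations $\theta_1^4\mapsto -\theta_1^4$, $\theta_2^4\mapsto\theta_3^4$, $\theta_3^4\mapsto\theta_2^4$, and under $S:\tau\mapsto -1/\tau$ the relations $\theta_1\mapsto(\tau/\sqrt{-1})^{1/2}\theta_2$, $\theta_2\mapsto(\tau/\sqrt{-1})^{1/2}\theta_1$, $\theta_3\mapsto(\tau/\sqrt{-1})^{1/2}\theta_3$. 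Substituting the $S$-relations into $\delta_2$ and $\varepsilon_2$ and using $(\tau/\sqrt{-1})^2=-\tau^2$ and $(\tau/\sqrt{-1})^4=\tau^4$ then yields at once $\delta_2(-1/\tau)=\tau^2\delta_1(\tau)$ and $\varepsilon_2(-1/\tau)=\tau^4\varepsilon_1(\tau)$, which is the last assertion of the lemma.

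Next I would establish modularity of $\delta_1$ and $\varepsilon_1$ over $\Gamma_0(2)$ by checking the automorphy relation on the two generators $T$ and $ST^2ST$. The $T$-relations above give immediately $\delta_1(\tau+1)=\delta_1(\tau)$ and $\varepsilon_1(\tau+1)=\varepsilon_1(\tau)$, in accordance with the integral $q$-expansions; this handles $T$ with automorphy factor $1$ and trivial character. For the generator $ST^2ST$ (whose lower-left entry is $2$) I would compose the $S$- and $T$-relations step by step, checking that the phase factors $e^{\pi\sqrt{-1}/4}$ coming from $\theta_1$ and the square-root automorphy factors $(\tau/\sqrt{-1})^{1/2}$ coming from $S$ recombine into exactly $(c\tau+d)^2$ for $\delta_1$ and $(c\tau+d)^4$ for $\varepsilon_1$, with trivial character. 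Holomorphy on $\mathbf{H}$ is clear from the convergent theta products, and holomorphy at the cusps follows from the $q$-expansions together with the $S$-law, which governs the cusp $0$. This gives $\delta_1$ of weight $2$ and $\varepsilon_1$ of weight $4$ over $\Gamma_0(2)$.

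For $\Gamma^0(2)$ I would avoid repeating the computation and instead use the conjugation relation $S^{-1}\Gamma_0(2)S=\Gamma^0(2)$ together with the weight-$k$ slash operator $f\mapsto f|_k S$, defined by $(f|_kS)(\tau)=\tau^{-k}f(-1/\tau)$. Rewriting the transformation laws of the first paragraph as $\delta_1|_2 S=\delta_2$ and $\varepsilon_1|_4 S=\varepsilon_2$, and using that slashing by $S$ carries weight-$k$ forms on $\Gamma_0(2)$ to weight-$k$ forms on $S^{-1}\Gamma_0(2)S=\Gamma^0(2)$, one obtains the modularity of $\delta_2,\varepsilon_2$ over $\Gamma^0(2)$ directly (equivalently, one checks the generators $STS,T^2STS$). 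For the ring presentation $M_\mathbf{R}(\Gamma^0(2))=\mathbf{R}[\delta_2,\varepsilon_2]$ I would invoke the valence/dimension formula for $\Gamma^0(2)$, which has index $3$ in $SL_2(\mathbf{Z})$, genus $0$, no elliptic points and two cusps; this yields $\dim M_k(\Gamma^0(2))=1+\lfloor k/4\rfloor$ for even $k\ge 0$, matching the number of monomials $\delta_2^a\varepsilon_2^b$ of weight $k$. Algebraic independence of $\delta_2$ and $\varepsilon_2$ follows from their distinct cusp behaviour — e.g. $\delta_2=-\tfrac18+\cdots$ while $\varepsilon_2=q^{1/2}+\cdots$ vanishes at $\infty$ — so these monomials are linearly independent and hence span, giving the polynomial presentation.

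The main obstacle is the phase bookkeeping in the second paragraph: verifying on the composite generator $ST^2ST$ that the eighth roots of unity from the $T$-law for $\theta_1$ and the half-integral automorphy factors $(\tau/\sqrt{-1})^{1/2}$ from the $S$-law combine into a single factor $(c\tau+d)^k$ with \emph{trivial} character, rather than a nontrivial multiplier system. Once this is settled and the dimension formula for $\Gamma^0(2)$ is in hand, the remaining ring-structure claim is routine, being the only genuinely structural (as opposed to computational) input.
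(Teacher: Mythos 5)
Your proposal cannot be compared against an in-paper argument, because the paper does not prove this lemma at all: it is imported verbatim from \cite{Liu1} (with the theta-quotient expressions for $\delta_i,\varepsilon_i$ quoted from \cite{HBJ} and \cite{Liu3}). What you have written is essentially the standard proof found in those references: evaluate the transformation laws (2.1)--(2.4) at $v=0$, read off the action of $S$ and $T$ on $\theta_1^4,\theta_2^4,\theta_3^4$, deduce the two transformation laws and the modularity on the generators $T$, $ST^2ST$ of $\Gamma_0(2)$, transport to $\Gamma^0(2)$ via $\delta_2=\delta_1|_2S$, $\varepsilon_2=\varepsilon_1|_4S$ and $S^{-1}\Gamma_0(2)S=\Gamma^0(2)$, and finish the ring statement with the valence formula plus linear independence of the monomials $\delta_2^a\varepsilon_2^b$ from their distinct orders of vanishing in $q^{1/2}$ at $\infty$. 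All of that is sound, and the phase bookkeeping on $ST^2ST=\left(\begin{smallmatrix}-1&-1\\2&1\end{smallmatrix}\right)$ that you flag as the main obstacle does close up with trivial character (it is a finite computation; alternatively one can quote that $\theta_2^4\theta_3^4$ and $\theta_2^4+\theta_3^4$ are the standard weight $4$ and weight $2$ generators for $\Gamma_0(2)$).

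One concrete error to fix: $\Gamma^0(2)$ is \emph{not} free of elliptic points. Being conjugate to $\Gamma_0(2)$, it has genus $0$, two cusps, and exactly one elliptic point of order $2$; indeed the Riemann--Hurwitz relation $\tfrac{\mu}{12}=g-1+\tfrac{\epsilon_\infty}{2}+\tfrac{1}{2}\sum(1-\tfrac{1}{e_i})$ with $\mu=3$, $g=0$, $\epsilon_\infty=2$ forces $\nu_2=1$. With your stated data (no elliptic points) the very dimension formula you invoke would give $\dim M_k=1$ for all even $k$, contradicting the count $1+\lfloor k/4\rfloor$ that you correctly state and that your argument genuinely needs (the $\lfloor k/4\rfloor$ term is precisely the contribution of the order-$2$ elliptic point). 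Correcting this makes the dimension count internally consistent, and the rest of the ring-structure argument then goes through as you describe.
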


\subsection{The modular form $\mathcal{P}_2(TZ, F_1-F_2, \xi, \tau)$} 
Let $F $ (resp. $G$) be a Hermitian vector bundle over $X$
equipped with a Hermitian connection $\nabla^F$ (resp.
$\nabla^G$). For any complex number $t$, let
$$\Lambda_t(F)=\mathbf{C}|_X+tF+t^2\Lambda^2(F)+\cdots ,
\ \  S_t(F)=\mathbf{C}|_X+tF+t^2S^2(F)+\cdots$$  denote
respectively the total exterior and symmetric powers  of $F$,
which live in $K(X)[[t]].$ The following relations between these
two operations  hold (cf. \cite[Chap. 3]{At}), \be
S_t(F)=\frac{1}{\Lambda_{-t}(F)},\ \ \ \
 \Lambda_t(F-G)=\frac{\Lambda_t(F)}{\Lambda_t(G)}.\ee

The connections $\nabla^F, \nabla^G$ naturally induce connections
on $\Lambda_t(F), S_t(F) $ etc. Moreover, if $\{\omega_i \}$,
$\{{\omega_j}' \}$ are formal Chern roots for the Hermitian vector
bundles $F$, $G$ respectively, then (cf. \cite[Chap. 1]{Hir}), \be
\mathrm{ch}\left(\Lambda_t{(F)},
\nabla^{\Lambda_t(F)}\right)=\prod\limits_i(1+e^{\omega_i}t)\ee
and we have the following formulas for Chern character forms,
\be{\rm ch}\left(S_t(F), \nabla^{S_t(F)} \right)=\frac{1}{{\rm
ch}\left(\Lambda_{-t}(F),\nabla^{\Lambda_{-t}(F)}
\right)}=\frac{1}{\prod\limits_i (1-e^{\omega_i}t)}\ ,\ee \be{\rm
ch}\left(\Lambda_t(F-G), \nabla^{\Lambda_t(F-G)}
\right)=\frac{{\rm ch}\left(\Lambda_{t}(F),\nabla^{\Lambda_t(F)}
\right)}{{\rm ch}\left(\Lambda_{t}(G),\nabla^{\Lambda_t(G)}
\right)}=\frac{\prod\limits_i(1+e^{\omega_i}t)}{\prod\limits_j(1+e^{{\omega_j}'}t)}\
.\ee

Let $q=e^{2\pi \sqrt{-1}\tau}$ with $\tau \in \mathbf{H}$, the upper
half complex plane.

Set
\be
\begin{split} \Theta_2(T_\CC Z, {F_1}_\CC-{F_2}_\CC, \xi_\CC)=&\bigotimes_{u=1}^\infty
S_{q^u}(\widetilde{T_\CC Z}) \otimes \bigotimes_{v=1}^\infty
\Lambda_{-q^{v-{1\over 2}}}(\widetilde{{F_1}_\CC}-\widetilde{{F_2}_\CC}-2\widetilde{\xi_\CC})\\
&\otimes\bigotimes_{r=1}^{\infty}\Lambda_{q^{r-\frac{1}{2}}}(\widetilde{\xi_\CC})
\otimes\bigotimes_{s=1}^{\infty}\Lambda_{q^{s}}(\widetilde{\xi_\CC}) ,\\
\end{split}\ee which is
an element in $K(X)[[q^{1\over2}]]$.

Clearly, $\Theta_2(T_\CC Z, {F_1}_\CC-{F_2}_\CC, \xi_\CC)$ admits
a formal Fourier expansion in $q^{1/2}$ as \be \Theta_2(T_\CC Z,
{F_1}_\CC-{F_2}_\CC, \xi_\CC)=B_0+B_1q^{1/2}+B_2q\cdots,\ee where
the $B_j$'s are elements in the semi-group formally generated by
complex vector bundles over $X$. Moreover, they carry canonically
induced connections denoted by $\nabla^{B_j}$ and let
$\nabla^{\Theta_2}$ be the induced connections with
$q^{1/2}$-coefficients on $\Theta_2$.

Set
\be \begin{split} &\mathcal{P}_2(TZ, F_1-F_2, \xi, \tau)\\
:=&\left\{e^{\frac{1}{24}E_2(\tau)(p_1(TZ)-p_1(F_1)+p_1(F_2))}\widehat{A}(TZ)\cosh\left(\frac{c}{2}\right)\mathrm{ch}\left(\Theta_2(T_{\mathbf C}Z,{F_1}_\CC-{F_2}_\CC, \xi_\CC)\right)\right\}^{(12)}.
\end{split}\ee

\begin{proposition} $\mathcal{P}_2(TZ, F_1-F_2, \xi, \tau)$ is a modular form of weight $6$ over $\Gamma^0(2)$.

\end{proposition}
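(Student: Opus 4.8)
The plan is to reduce the assertion to the transformation laws (2.1)--(2.4) of the Jacobi theta functions and the quasi-modular law (2.8) of $E_2$, after first rewriting $\mathcal{P}_2(TZ, F_1-F_2, \xi, \tau)$ as the degree-$12$ part of an explicit finite product of theta quotients in the formal Chern roots. First I would introduce formal Chern roots: write the Chern roots of $T_\CC Z$ as $\{\pm 2\pi\ii x_j\}_{1\le j\le 5}$, those of ${F_1}_\CC$ as $\{\pm 2\pi\ii y_l\}$, those of ${F_2}_\CC$ as $\{\pm 2\pi\ii z_k\}$, and recall that since $\xi$ is rank two oriented with Euler form $c$, the bundle $\xi_\CC$ has Chern roots $\pm c$. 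Using the product formulas (2.15)--(2.17) together with the infinite-product definitions of $\theta,\theta_1,\theta_2,\theta_3$, I would show that $\widehat{A}(TZ)\cosh(c/2)\,\mathrm{ch}(\Theta_2)$ equals a finite product in which $\bigotimes_u S_{q^u}(\widetilde{T_\CC Z})$ contributes the $\theta$-quotients $\prod_j x_j\,\theta'(0,\tau)/\theta(x_j,\tau)$, the factor $\bigotimes_v\Lambda_{-q^{v-1/2}}(\widetilde{{F_1}_\CC}-\widetilde{{F_2}_\CC}-2\widetilde{\xi_\CC})$ contributes $\theta_2$-quotients in the $y_l$, $z_k$ and $c$, and the two factors $\bigotimes_r\Lambda_{q^{r-1/2}}(\widetilde{\xi_\CC})$, $\bigotimes_s\Lambda_{q^s}(\widetilde{\xi_\CC})$ contribute $\theta_3$- and $\theta_1$-factors in $c$. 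Multiplying by $e^{\frac{1}{24}E_2(\tau)(p_1(TZ)-p_1(F_1)+p_1(F_2))}$ and extracting the degree-$12$ component gives the closed form to which all subsequent computations apply.

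The heart of the proof is the behaviour under $S:\tau\mapsto-1/\tau$, which I would analyse via (2.1)--(2.4). Each theta quotient acquires a factor $e^{\pi\ii\tau v^2}$ at the relevant root together with matched constants $(\tau/\ii)^{1/2}$; in every factor these constants cancel — for the $T_\CC Z$-factor because the weight $3/2$ of $\theta'(0,\tau)$ exactly balances the transformation of $x/\theta(x)$ — so that $\widehat{A}(TZ)\cosh(c/2)\,\mathrm{ch}(\Theta_2)$ at $-1/\tau$ equals an anomaly factor times the analogous product with every root rescaled by $\tau$ and with $\theta_2$ replaced by $\theta_1$. Writing the theta-argument as $v=\text{root}/2\pi\ii$, the quadratic exponentials coming from $T_\CC Z,{F_1}_\CC,{F_2}_\CC$ assemble to $\exp\!\big(\tfrac{\ii\tau}{4\pi}(p_1(TZ)-p_1(F_1)+p_1(F_2))\big)$, while the three $\xi$-contributions enter with coefficients $-2,+1,+1$ and hence cancel — which is precisely why $p_1(\xi)$ is absent from the exponent. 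By (2.8) the prefactor contributes the compensating factor $\exp\!\big(-\tfrac{\ii\tau}{4\pi}(p_1(TZ)-p_1(F_1)+p_1(F_2))\big)$, arising from the inhomogeneous term $-6\ii\tau/\pi$ together with the normalising constant $\tfrac{1}{24}$ (indeed $\tfrac{1}{24}\cdot(-6\ii\tau/\pi)=-\ii\tau/4\pi$). The two anomalies cancel identically, leaving $\mathcal{P}_2(-1/\tau)=\tau^6\mathcal{P}_1(\tau)$, where $\mathcal{P}_1$ is the companion form built from the $\theta_1$-analogue $\Theta_1$; the power $\tau^6$ is the effect of the degree-$12$ extraction on the rescaling $\text{root}\mapsto\tau\cdot\text{root}$, since a degree-$12$ monomial involves six roots.

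Next I would record the $T:\tau\mapsto\tau+1$ law. Here $E_2(\tau+1)=E_2(\tau)$ by (2.7), while by (2.1)--(2.4) the functions $\theta,\theta_1$ are multiplied by $e^{\pi\ii/4}$ and $\theta_2,\theta_3$ are interchanged; tracing these through the closed form relates $\mathcal{P}_2$ to the $\theta_3$-companion $\mathcal{P}_3$ up to an explicit root-of-unity character. Finally, since $\Gamma^0(2)$ is generated by $STS$ and $T^2STS$, I would compose the $S$- and $T$-laws along these two words. At the level of the governing theta function one checks that $STS$ acts as $\theta_2\mapsto\theta_1\mapsto\theta_1\mapsto\theta_2$ and that $T^2STS$ first returns $\theta_2$ to $\theta_2$ via $STS$ and is then fixed by $T^2$ (which sends $\theta_2\mapsto\theta_3\mapsto\theta_2$); thus both generators return $\mathcal{P}_2$ to itself. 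Multiplying the accumulated automorphy factors yields $(c\tau+d)^6$ times a character, and the cancellation of the second step guarantees that no quasi-modular anomaly survives. Holomorphicity on $\mathbf{H}\cup\{\infty\}$ is immediate because $\Theta_2\in K(X)[[q^{1/2}]]$ begins in degree $q^0$, so the $q$-expansion of $\mathcal{P}_2$ has no principal part. Together these verify that $\mathcal{P}_2$ is a modular form of weight $6$ over $\Gamma^0(2)$.

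I expect the main obstacle to lie in the first step combined with the anomaly bookkeeping of the second. Obtaining the exact closed-form theta expression requires matching the four infinite products against the theta product formulas and keeping precise track of the reduced-bundle ($\widetilde{\,\cdot\,}$) normalisations; and one must verify with care that the $\xi$-coefficients $-2,+1,+1$ cancel the $\xi$-anomaly while the residual quadratic anomaly in $p_1(TZ)-p_1(F_1)+p_1(F_2)$ is matched exactly by the $E_2$-correction, the coefficient $\tfrac{1}{24}$ being calibrated against the inhomogeneous term $-6\ii\tau/\pi$ of (2.8). This precise numerical matching is exactly what makes the construction modular.
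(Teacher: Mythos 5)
Your proposal follows essentially the same route as the paper: rewrite $\mathcal{P}_2(TZ,F_1-F_2,\xi,\tau)$ via Chern roots as the degree-$12$ part of a product of theta quotients (the paper's formula (2.21)), then apply the transformation laws (2.1)--(2.4) for the theta functions together with (2.7)--(2.8) for $E_2$ to the generators $STS$ and $T^2STS$ of $\Gamma^0(2)$. The paper leaves the anomaly bookkeeping implicit, whereas you spell it out (cancellation of the $\xi$-contributions with coefficients $-2,+1,+1$ and of the residual quadratic anomaly against the $E_2$-correction), so your argument is a correct, more detailed rendering of the paper's proof.
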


\begin{proof} Let $\{\pm 2\pi \ii y_k\}$ (resp. $\{\pm 2\pi \ii z_k\}$, $\{\pm 2\pi \ii x_j\}$) be the
 formal Chern roots for $({F_1}_\CC, \nabla^{{F_1}_\CC})$ (resp. $({F_2}_\CC, \nabla^{{F_2}_\CC})$,
 $(TZ_\CC, \nabla^{TZ_\CC})$). Let $c=2\pi \ii u$.
By the Chern root algorithm, we have
\be \begin{split} &\mathcal{P}_2(TZ, F_1-F_2, \xi, \tau)\\
=& \left\{e^{\frac{1}{24}E_2(\tau)(p_1(TZ)-p_1(F_1)+p_1(F_2))}\widehat{A}(TZ)\cosh\left(\frac{c}{2}\right)\mathrm{ch}\left(\Theta_2(T_\CC Z, {F_1}_\CC-{F_2}_\CC, \xi_\CC)\right)\right\}^{(12)}\\
=&\left\{e^{\frac{1}{24}E_2(\tau)(p_1(TZ)-p_1(F_1)+p_1(F_2))}\left(\prod_{j=1}^{5}\left(x_j\frac{\theta'(0,\tau)}{\theta(x_j,\tau)}\right)
\right)\left(\prod_{j=1}^{\left[\frac{m}{2}\right]}\frac{\theta_{2}(y_j,\tau)}{\theta_{2}(0,\tau)}\right)\right.\\
& \left.\cdot\left(\prod_{k=1}^{\left[\frac{n}{2}\right]}\frac{\theta_{2}(0,\tau)}{\theta_{2}(z_k,\tau)}\right)
\cdot\frac{\theta_2^2(0,\tau)}{\theta_2^2(u,\tau)}\frac{\theta_3(u,\tau)}{\theta_3(0,\tau)}\frac{\theta_1(u,\tau)}{\theta_1(0,\tau)}\right\}^{(12)}.\end{split}\ee

Then we can apply the transformation laws (2.1)-(2.4) for theta
functions as well as the transformation laws (2.7), (2.8) to
(2.21) to get the desired results.
\end{proof}

 In addition to the above modular form, we have also
constructed in \cite{HLZ} the modular form
\be \begin{split}\mathcal{ P}_1(TZ, V, \xi, \tau):=&\left\{e^{\frac{1}{24}E_2(\tau)(p_1(TZ)-p_1(V))}\right.\\
&\ \ \left. \cdot \frac{\widehat{A}(TZ)\mathrm{det}^{1/2}\left(2\cosh\left(\frac{\ii}{4\pi}R^V\right)\right)}{\cosh^2\left(\frac{c}{2}\right)}
\mathrm{ch}\left(\Theta_1(T_{\mathbf C}Z,V_{\mathbf C}, \xi_\CC)\right)\right\}^{(12)},\end{split}\ee
where
\be \begin{split} \Theta_1(T_\CC Z, V_\CC, \xi_\CC)=&\bigotimes_{u=1}^\infty S_{q^u}(\widetilde{T_\CC Z})
\otimes \bigotimes_{v=1}^\infty \Lambda_{q^v}(\widetilde{V_\CC}-2\widetilde{\xi}_\CC)\\
&\otimes \bigotimes_{r=1}^\infty
\Lambda_{q^{r-1/2}}(\widetilde{\xi}_\CC)\otimes
\bigotimes_{s=1}^\infty
\Lambda_{-q^{s-1/2}}(\widetilde{\xi}_\CC).\end{split} \ee We
showed in \cite{HLZ} that $\mathcal{P}_1(TZ, V, \xi, \tau)$ is a
modular form of weight $6$ over $\Gamma_0(2)$ while
$\mathcal{P}_2(TZ, V, \xi, \tau)$ is a modular form of weight $6$
over $\Gamma^0(2)$ and moreover they are modularly related in the
sense that
$$\mathcal{ P}_1\left(TZ, V, \xi,
-\frac{1}{\tau}\right)=2^{[\frac{\mathrm{dim}
V}{2}]}\tau^6\mathcal{ P}_2(TZ, V, \xi, \tau).$$ We call such a
pair of modular forms a modular pair (see \cite{HLZ} for the cases
of general dimensions).

One can use this modular pair $(\mathcal{P}_1(TZ, V, \xi, \tau),
\mathcal{P}_2(TZ, V, \xi, \tau)) $ to derive the
Alvarez-Gaum$\mathrm{\acute{e}}$-Witten miraculous anomaly
cancellation formula by setting $V=TZ,\, \xi=\CC$ and obtain its
various generalizations (see \cite{Liu1, Liu3, HZ2, HL, HLZ, LW}
for details).

In the following subsection, we will use the modularity of
$\mathcal{P}_2(TZ, F_1-F_2, \xi, \tau)$ to derive the
Green-Schwarz type factorization formulas.
 It's amazing to see that all these anomaly cancellations  due to Alvarez-Gaum$\mathrm{\acute{e}}$-Witten,
 Green-Schwarz as well as Schwarz-Witten
can be derived from the modular pair $(\mathcal{P}_1(TZ, V, \xi,
\tau), \mathcal{P}_2(TZ, V, \xi, \tau)) $.

\subsection{Derivation of Green-Schwarz type factorizations from modularity}


From Proposition 2.1, we see that $\mathcal{P}_2(TZ, F_1-F_2, \xi, \tau)$ is a modular form of weight $6$ over $\Gamma^0(2)$. Therefore, by Lemma 2.1, there exist $h_1, h_2\in \Omega^{12}(X)$ such that
 \be \mathcal{P}_2(TZ, F_1-F_2, \xi, \tau)=h_0(8\delta_2)^3+h_1(8\delta_2)\varepsilon_2.
\ee
Therefore
\be \begin{split} &\left\{e^{\frac{1}{24}(1-24q+\cdots)(p_1(TZ)-p_1(F_1)+p_1(F_2))}\widehat{A}(TZ)\cosh\left(\frac{c}{2}\right)\mathrm{ch}
\left(B_0+B_1q^{\frac{1}{2}}+B_2q+\cdots\right)\right\}^{(12)}\\
=&h_0(8\delta_2)^3+h_1(8\delta_2)\varepsilon_2\\
=&h_0(-1-24q^{\frac{1}{2}}-24q-\cdots)^3+h_1(-1-24q^{\frac{1}{2}}-24q-\cdots)(q^{\frac{1}{2}}+8q+\cdots).
\end{split}\ee

Comparing the coefficients of $1,\, q^{\frac{1}{2}}$ and $q$ in
both sides of (2.25), we have \be
\left\{e^{\frac{1}{24}(p_1(TZ)-p_1(F_1)+p_1(F_2))}\widehat{A}(TZ)\cosh\left(\frac{c}{2}\right)\mathrm{ch}
(B_0)\right\}^{(12)}=-h_0,\ee \be
\left\{e^{\frac{1}{24}(p_1(TZ)-p_1(F_1)+p_1(F_2))}\widehat{A}(TZ)\cosh\left(\frac{c}{2}\right)\mathrm{ch}
(B_1)\right\}^{(12)}=-h_1-72h_0,\ee \be
\begin{split} &\left\{e^{\frac{1}{24}(p_1(TZ)-p_1(F_1)+p_1(F_2))}(-(p_1(TZ)+p_1(F_1)-p_1(F_2)))
\widehat{A}(TZ)\cosh\left(\frac{c}{2}\right)\mathrm{ch}(B_0)\right.\\
&\left.+e^{\frac{1}{24}(p_1(TZ)-p_1(F_1)+p_1(F_2))}\widehat{A}(TZ)\cosh\left(\frac{c}{2}\right)\mathrm{ch}(B_2)\right\}^{(12)}\\
=&-32h_1-1800h_0.\end{split}
\ee

By (2.26)-(2.28), we see that \be
\begin{split} &\left\{e^{\frac{1}{24}(p_1(TZ)-p_1(F_1)+p_1(F_2))}(-(p_1(TZ)+p_1(F_1)-p_1(F_2)))
\widehat{A}(TZ)\cosh\left(\frac{c}{2}\right)\mathrm{ch}(B_0)\right.\\
&\left.+e^{\frac{1}{24}(p_1(TZ)-p_1(F_1)+p_1(F_2))}\widehat{A}(TZ)\cosh\left(\frac{c}{2}\right)\mathrm{ch}(B_2)\right\}^{(12)}\\
=&\left\{e^{\frac{1}{24}(p_1(TZ)-p_1(F_1)+p_1(F_2))}\widehat{A}(TZ)\cosh\left(\frac{c}{2}\right)\mathrm{ch}(32B_1-504B_0)
\right\}^{(12)}.\end{split}
\ee

In the following, let's expand $\Theta_2(T_\CC Z,
{F_1}_\CC-{F_2}_\CC, \xi_\CC)$ to find $B_0,\, B_1,\, B_2$. In
fact, we have
\be \begin{split}&\Theta_2(T_\CC Z, {F_1}_\CC-{F_2}_\CC, \xi_\CC) \\
=&\bigotimes_{u=1}^\infty
S_{q^u}(\widetilde{T_\CC Z}) \otimes \bigotimes_{v=1}^\infty
\Lambda_{-q^{v-{1\over 2}}}(\widetilde{{F_1}_\CC}-\widetilde{{F_2}_\CC}-2\widetilde{\xi_\CC})\\
&\otimes\bigotimes_{r=1}^{\infty}\Lambda_{q^{r-\frac{1}{2}}}(\widetilde{\xi_\CC})
\otimes\bigotimes_{s=1}^{\infty}\Lambda_{q^{s}}(\widetilde{\xi_\CC})\\
=&\bigotimes_{u=1}^\infty
S_{q^u}(\widetilde{T_\CC Z}) \\
&\otimes \bigotimes_{v=1}^\infty
\Lambda_{-q^{v-{1\over 2}}}(\widetilde{{F_1}_\CC})\\
&\otimes \frac{1}{\bigotimes_{v=1}^\infty
\Lambda_{-q^{v-{1\over 2}}}(\widetilde{{F_2}_\CC})} \\
&\otimes  \frac{1}{\bigotimes_{v=1}^\infty
(\Lambda_{-q^{v-{1\over 2}}}(\widetilde{\xi_\CC}))^2}\\
&\otimes\bigotimes_{r=1}^{\infty}\Lambda_{q^{r-\frac{1}{2}}}(\widetilde{\xi_\CC})\\
&\otimes\bigotimes_{s=1}^{\infty}\Lambda_{q^{s}}(\widetilde{\xi_\CC})\\
=&(1+(T_\CC Z-10)q+O(q^2))\\
&\otimes \left(1+(m-{F_1}_\CC)q^{\frac{1}{2}}+\left(\wedge^2{F_1}_\CC-m{F_1}_\CC+\frac{m(m+1)}{2}\right)q+O(q^{\frac{3}{2}})\right)\\
&\otimes \left(1+({F_2}_\CC-n)q^{\frac{1}{2}}+\left(S^2{F_2}_\CC-n{F_2}_\CC+\frac{n(n-1)}{2}\right)q+O(q^{\frac{3}{2}})\right)\\
&\otimes \left(1+2\widetilde{\xi_\CC}q^{\frac{1}{2}}+\left(3\widetilde{\xi_\CC}\otimes \widetilde{\xi_\CC}+4\widetilde{\xi_\CC}\right)q+O(q^{\frac{3}{2}})\right)\\
&\otimes \left(1+\widetilde{\xi_\CC}q^{\frac{1}{2}}-2\widetilde{\xi_\CC}q+O(q^{\frac{3}{2}})\right)\\
&\otimes(1+\widetilde{\xi_\CC}q+O(q^2))\\
=&1+(m-{F_1}_\CC+{F_2}_\CC-n+3\widetilde{\xi_\CC})q^{\frac{1}{2}}\\
&+\left(\wedge^2{F_1}_\CC+S^2{F_2}_\CC- {F_1}_\CC\otimes{F_2}_\CC+T_\CC Z+\frac{(m-n)^2+(m-n)}{2}-10-(m-n)( {F_1}_\CC- {F_2}_\CC)\right.\\
 &\left. \ \ \ \ \ \ \ \ +5\widetilde{\xi_\CC}\otimes \widetilde{\xi_\CC}+3(m-{F_1}_\CC+{F_2}_\CC-n+1)\otimes\widetilde{\xi_\CC}\right)q+O(q^{\frac{3}{2}}).
\end{split}\ee

Therefore, we have
\be \begin{split} B_0=&1,\\
B_1=&m-{F_1}_\CC+{F_2}_\CC-n+3\widetilde{\xi_\CC},\\
B_2=&\wedge^2{F_1}_\CC+S^2{F_2}_\CC- {F_1}_\CC\otimes{F_2}_\CC+T_\CC Z+\frac{(m-n)^2+(m-n)}{2}-10-(m-n)( {F_1}_\CC- {F_2}_\CC)\\
&+5\widetilde{\xi_\CC}\otimes \widetilde{\xi_\CC}+3(m-{F_1}_\CC+{F_2}_\CC-n+1)\otimes\widetilde{\xi_\CC}\end{split} \ee

From (2.29), we see that
\be
\begin{split} &\left\{\widehat{A}(TZ)\cosh\left(\frac{c}{2}\right)\mathrm{ch}(B_2-32B_1+504B_0)
\right\}^{(12)}\\
=&(p_1(TZ)-p_1(F_1)+p_1(F_2))\\
&\cdot\left\{-\frac{e^{\frac{1}{24}(p_1(TZ)-p_1(F_1)+p_1(F_2))}-1}{p_1(TZ)-p_1(F_1)+p_1(F_2)}
\widehat{A}(TZ)e^{\frac{c}{2}}\mathrm{ch}(B_2-32B_1+504B_0)\right.\\
&\ \ \ \ \ \ \
\left.+e^{\frac{1}{24}(p_1(TZ)-p_1(F_1)+p_1(F_2))}\widehat{A}(TZ)e^{\frac{c}{2}}\mathrm{ch}(B_0)\right\}^{(8)}.\end{split}
\ee

However, from (2.31), we have
\be \begin{split} &B_2-32B_1+504B_0\\
 =&\wedge^2{F_1}_\CC+S^2{F_2}_\CC-{F_1}_\CC\otimes {F_2}_\CC+T_\CC Z+
\frac{(m-n-32)(m-n-31)}{2}-2\\
&-(m-n-32)({F_1}_\CC-{F_2}_\CC)+5\widetilde{\xi_\CC}\otimes\widetilde{\xi_\CC}+
3(m-n-31-{F_1}_\CC+{F_2}_\CC)\otimes\widetilde{\xi_\CC}. \end{split}\ee

 Theorem 1.1 follows from (2.32) and (2.33).

$$ $$
\noindent {\bf Acknowledgements.} The first author is partially
supported by a start-up grant from National University of
Singapore. The second author is partially supported by NSF. The
third author is partially supported by  MOE and NNSFC.

\end{document}